\newcommand{\beq}{\begin{equation}}
\newcommand{\eeq}{\end{equation}}
\newcommand{\beqn}{\begin{eqnarray}}
\newcommand{\eeqn}{\end{eqnarray}}
\newcommand{\RR}{{\rm I\hspace{-0.50ex}R} }
\newcommand{\CC}{\rm \hbox{C\kern-.57em\raise.47ex
         \hbox{$\scriptscriptstyle |$}\kern+0.5 em }}
\newcommand{\NN}{{\rm I\hspace{-0.50ex}N} }
\newcommand{\UU}{{\mathcal U} }
\newcommand{\reachable}[3]{ {\mathcal R}^{#1}_{#2}(#3)}
\newcommand{\closurereachable}{\overline{ {\mathcal R}_{\UU}(e)}}
\newcommand{\e}{\epsilon}
\newcommand{\p}{\Psi}
\newcommand{\idt}{ i \frac{\partial}{\partial t}}
 \newtheorem{thm}{Theorem}[section]
 \newtheorem{lem}[thm]{Lemma}
 \theoremstyle{definition}
 \newtheorem{defn}[thm]{Definition}
 \theoremstyle{remark}
 \newtheorem{rem}[thm]{Remark}
 \numberwithin{equation}{section}
\begin{document}
%
%
%
%
%
%
%
%
%
\title[Beyond bilinear quantum control]
{Beyond bilinear controllability : \\ applications to quantum control}
\author[Gabriel Turinici]{Gabriel Turinici}

\address{%
CEREMADE, Universit\'e Paris Dauphine \\
Place du Mar\'echal De Lattre De Tassigny\\
75775 PARIS CEDEX 16\\
FRANCE}

\email{Gabriel.Turinici@dauphine.fr}

\thanks{This work was partially supported by INRIA-Rocquencourt and 
CERMICS-ENPC, Champs sur Marne, France. The author acknowledges
an ACI-NIM grant from the Minist\`ere de la Recherche, France.}
\subjclass{Primary  34H05, 93B05; Secondary 35Q40}

\keywords{controllability, 
bilinear controllability, quantum control, laser control}

\date{October 1st, 2005}

\begin{abstract}
Quantum control is traditionally expressed through bilinear models and their 
associated Lie algebra controllability criteria. But, the 
first order approximation are not always sufficient and
higher order developpements are used in recent works. Motivated
by these applications, we give in this paper a criterion that applies
to situations where the evolution operator is expressed as sum of
possibly non-linear 
real functionals of the {\bf same} control that multiplies some
time independent (coupling) operators.
\end{abstract}

\maketitle
\tableofcontents
\section{Background on quantum control}

Controlling the evolution of molecular systems at quantum level
has been envisioned from the very beginings of the laser technology.
However, approaches based on designing laser pulses based on
intuition alone did not succed in general situations due to
the very complex interactions that are at work between the laser
and the molecules to be controlled, which results e.g., in the
redistribution of the incoming laser energy to the whole molecule.
 Even if this
circumstance initially slowed down investigations in this area,
the realization
that this inconvenient can be recast and attacked with the tools of
(optimal) control theory~\cite{hbref110} greatly contributed to the
first positive experimental
results~\cite{hbref1,hbref3,hbref4,hbref5,hbref6,hbref112,hbref113}.

The regime that is relevant for this work is related to time scales of the
order of the femtosecond
($10^{-15}$) up to picoseconds ($10^{-12}$) and the space scales
from the size of one or two atoms to large polyatomic molecules.

Historically, the first applications that were envisionned were the
manipulation of chemical bonds (e.g., selective dissociation) or isotopic
separation. Although initially, only few atoms molecules were investigated
(di-atomics) the experiments soon were designed to treat more complex
situations~\cite{hbref1} as selective bond dissociation in an organi-metalic
complex $CpFe(CO)_2Cl$ ($Cp$ is the cyclopentadienyl ion)
by maximizing or minimizing the quotient
of $CpFeCOCl^+$ ions obtained with respect to $FeCl^+$ ions.

Continuing this breakthrough, 
other poly-atomic molecules were
considered in strong fields. For instance, in ~\cite{hbref3} the molecules are
the acetone $(CH_3)_2 CO$,  the trifluoroacetone $CH_3\-COCF_3$
and the acetophenone $C_6 H_5 COCH_3$. Using tailored laser pulses
it was shown possible to obtain $CH_3CO$ from
$(CH_3)_2 CO$, $CF_3$ (or $CH_3$) from  $CH_3 COCF_3$
but also $C_6H_5CH_3$ (toluene) from  $C_6 H_5 COCH_3$.

But the applications of laser control do not stop here. High
Harmonic Generation)~\cite{hbref7} is a technique that allows to obtain
output lasers whose frequency is large interger multiples of the
input pulses.

A different class of applications works in a different regime of
shorter time scales and large intensity. This regime is additionally not
compatible with the standard  Born-Oppenheimer approximation
and requires to consider both nucleari and electrons as quantum particles with
entangled wevefunction~\cite{handbookbandrauk}.

In a different framework,
the manipulation of quantum states of atoms and molecules allows
to envision the construction of quantum computers~\cite{Deu85a,Sho94a}

Finally, biologically related applications are also the object of
ongoing research.

\section{Background on controllability criteria}

We start in this section to investigate the theoretical controllability
results that are nowadays available for quantum systems. The evolution
of the system will be
described by the driving
Schr\"odinger equation (we work here in atomic units i.e.
$\hbar=1$)
\beqn \label{eq:Schrt}
& \ & \idt {\p(t,x)} =  H(t) {\p(t,x)}  \\
& \ & \nonumber \p(t_0,x) = \p_0(x).
\eeqn
where $H(t)$ is the Hamiltonian of the system and $x \in \RR^\gamma$ the set of
internal degrees of freedom.
We introduce the Hilbert space structure given by the scalar product
\beq
\langle f , g \rangle = \int_{\RR^\gamma} \overline{f(x)} g(x) dx
\eeq
where 
$\overline{a+ib} = a-ib$ the conjugate of a complex number.

We only consider in this paper situations when
the Hamiltonian is auto-adjoint
$H(t)^\dag = H(t)$; we denoted by $T^\dag$ the adjoint
of a operator $T$. 
The auto-adjointeness of $H$ implies that the
${L^2_x({\RR}^{{\gamma}})}$
 norm of the evolving state is conserved.
Indeed
\beqn
& \ & 
\frac{d}{dt} \|\p(x,t)\|_{L^2_x({\RR}^{{\gamma}})}=
\frac{d}{dt}  \langle \p(x,t),\p(x,t) \rangle 
\nonumber \\ & \ & 
= \langle \frac{d}{dt}  \p(x,t),\p(x,t) \rangle 
+ \langle \p(x,t) , \frac{d}{dt} \p(x,t) \rangle 
\nonumber \\ & \ & 
= \langle \frac{H(t)}{i}  \p(x,t),\p(x,t) \rangle 
+ \langle \p(x,t) , \frac{H(t)}{i} \p(x,t) \rangle  = 0.
\eeqn
Thus 
\begin{equation} \label{consv}
\|\p(x,t)\|_{L^2_x({\RR}^{{\gamma}})}=
\|\p_0\|_{L^2({\RR}^{{\gamma}})},
 \ \forall t>0,
\end{equation}
so the  wave function $\p(t)$, evolves
on the (complex) unit sphere 
$$S = \left\{ {\psi  \in L^2(\RR^\gamma)\,:
\,\left\| \psi  \right\|_{L^2 (\RR^\gamma)}  = 1} \right\}.$$

When the system  evolves freely under its own internal
dynamics i.e. when isolated molecules are considered, 
the free evolution Hamiltonian $H_0$ is introduced. This Hamiltonian
is the
sum of the kinetic part $T$ and the potential operator $V(x)$ : 
$H_0=T + V(x)$. A prototypical example of $T$ is the Laplace operator
while for $V(x)$ one can encounter Coulomb potential or 
Lennard-Jones type dependence.
We obtain the following evolution
in the absence of external interaction:
\beqn \label{eq:Schr0}
& \ & \idt {\p(t,x)} =  H_0 {\p(t,x)}  \\
& \ & \nonumber \p(t_0,x) = \p_0(x).
\eeqn
But, when the free evolution of the system does not
generate a satisfactory dynamical output, an external
interaction is introduced to {\it control} it. 
An example  of 
external control of paramount importance is a
laser source of intensity
$\e(t) \in \RR, \ t \ge 0$. 

The purpose of control may be formulated as
to drive the system from its initial state $\p_0$ to take a convenient
dynamical path to a final state compatible with
predefined requirements. The control is here the laser intensity $\e(t)$.
We will come back later with details on the laser field $\e(t)$.

This laser will modify the 
Hamiltonian $H(t)$ of the system. A first order approximation
can be considered by introducing a 
time-independent dipole moment operator $\mu(x)$ resulting
in the dynamics:
\beqn \label{eq:Schr}
& \ & \idt {\p(t,x)} = \left( H_0 + \e(t)\mu \right) {\p(t,x)}  \\
& \ & \nonumber \p(t_0,x) = \p_0(x).
\eeqn
This is the so-called {\it bi-linear} framework
(the control enters linearly multiplying the state),
  that is the object of 
much theoretical and numerical work in quantum control. We also review 
below some of the results that are available in this formulation.
However, recently, 
higher order field dependence has  been considered in
different circumstances see e.g., ~\cite{DION1,DION2} 
for details. In these situations the Hamiltonian $H(t)$ is
developped further as~:
\beq \label{eq:H_high}
H(t) = H_0 +\e(t)\mu_1  + \e^2(t) \mu_2 + ... + \e^L(t) \mu_L.
\eeq

The question that will be of interest to us in this work is the study of
all possible final states for the quantum system. This question is important
in order to understand the
capabilities that a laboratory experiment will be able to provide
and also, in a more general setting, to accompany
the introduction of new experimental protocols.

More specifically, we will show how the criteria available for bilinear
control can be extended to treat the Hamiltonian~\eqref{eq:H_high} where
a {\bf single} control amplitude $\e(t)$  appears before different
coupling operators $\mu_1,...,\mu_L$.

Many of the questions regarding the properties of the quantum control
procedures, such as controllability, optimal control definition, etc, ...
need, in order to be defined, to specify the admissible control
class, i.e., the set $\UU$ where the control $\e(t)$ is alowed
to vary. Among the properties that can define this admissible set,
some are related to the regularity of the time-dependence ($L^2$,
$H^1$, ... etc) or of the Fourier expression (sum of sinusoidal functions
multiplied by an overall enveloppe, etc,...) or to additional
structure: e.g. piecewise continuous, piecewise constant, locally
bounded ...

The choice of one or several conditions in the list above is
motivated in practice by capability to reproduce that
particular form or to inherent experimental restrictions
(finite total laser energy/fluence, etc). As the laser technology
is constantly evolving, the first class of constraints becomes
less critical and thus it is realistic to consider very weak
constraints on the control set, e.g. 
 $\UU = L^2(\RR) \cap L^\infty_{loc}(\RR)$.

However, to treat even more general situations,
we will consider in this work controls $\e(t)$ that are
piecewise constant, taking any value in a set $V$, which will remain
fully general.

\subsection{Infinite dimensional bilinear control}\label{sec:infinitedim}

When compared to the finite dimensional control equations (see 
Section~\ref{subsec:bilinearcontrol}),
controllability of the infinite dimensional version of the
bilinear Time Dependent
Schr\"odinger Equation is much less understood at this time. In fact,
most of the progress obtained so far takes the form of
 negative results, in contradiction with the positive results available
in finite dimensional settings. However we see
the absence of positive controllability results 
is rather a failure of
today's control theory tools to provide insight into controllability
rather than an actual restriction.
We do believe that new tools and concepts will make positive results
possible.

Let us write the solution of~\eqref{eq:Schr} in the following form:
\beqn
\p(t)=e^{-iH_0t}\p_0-i\int_0^t\e(s)e^{-iH_0(t-s)}\mu \p(s)ds
\eeqn
This formulation (see~\cite{cazenave} for details) is granted
by the properties of the operator
$\mu : H^1_0(\RR^\gamma) \to  H^{-1}(\RR^\gamma)$
which is continuous when $\mu$ is bounded; we also recall that the
control $\e$ can be considered bounded in both $L^\infty$ and $L^2$.

The application $\e(t) \mapsto \p(x,t)$
 possesses an important compacity property which is
the key of the controllability results 
(we refer the reader interested in details 
to~\cite{bms,thesejulien})~:
\begin{lem} \label{lemma:compacity_bilinear}
Suppose that $\mu : X \to X$
is a bounded operator and that $H_0$ generates a $C^0$
semigroup of bounded linear operators on some Banach space $X$
(e.g. $X = H^1_0(\RR^\gamma)$). Denote for $T>0$ and
$\e \in L^1([0,T])$ by $\p_\e(x,t)$ the
solution  of~\eqref{eq:Schr} with control $\e$.
Then  $\e \mapsto \p_\e$ is a compact mapping in the sense that for any
$\e_n$ that converges weakly to $\e$ in $L^1([0,T])$ $\p_{\e_n}$ converges
strongly in $C([0,T]; X)$ to $\p_\e$.
\end{lem}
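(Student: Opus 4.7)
The plan is to write the two solutions as fixed points of the integral equation, take the difference, and use Gronwall to reduce the problem to a single estimate on a "source term'' that depends linearly on $\e_n - \e$. The key ingredient will then be that weak convergence in $L^1$ suffices to pass this source term to zero strongly in $C([0,T];X)$, provided the $X$-valued integrand is sufficiently regular in time.

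Concretely: set $\psi_n=\psi_{\e_n}$, $\psi=\psi_\e$. The hypothesis $\e_n \rightharpoonup \e$ in $L^1([0,T])$ gives, by Banach--Steinhaus, a uniform bound $\sup_n\|\e_n\|_{L^1}<\infty$, and, by the Dunford--Pettis theorem, equi-integrability of $\{\e_n\}$. Let $M=\sup_{\tau\in[0,T]}\|e^{-iH_0\tau}\|_{\mathcal{L}(X)}$, which is finite since $H_0$ generates a $C^0$ semigroup. Applying the integral form of \eqref{eq:Schr} and Gronwall's inequality gives uniform mild estimates of the form $\|\psi_n(t)\|_X \le \|\psi_0\|_X\,M\,\exp(M\|\mu\|\,\|\e_n\|_{L^1})$, so $\{\psi_n\}$ is uniformly bounded in $C([0,T];X)$. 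Subtracting the integral equations for $\psi_n$ and $\psi$ yields
\begin{equation*}
\psi_n(t)-\psi(t) = A_n(t) - i\int_0^t \e_n(s)\,e^{-iH_0(t-s)}\mu\bigl(\psi_n(s)-\psi(s)\bigr)ds,
\end{equation*}
where
\begin{equation*}
A_n(t) := -i\int_0^t \bigl(\e_n(s)-\e(s)\bigr)\,e^{-iH_0(t-s)}\mu\,\psi(s)\,ds.
\end{equation*}
A second Gronwall argument on $\|\psi_n-\psi\|_X$, using the uniform $L^1$ bound on $\e_n$, shows that $\|\psi_n-\psi\|_{C([0,T];X)} \le C\,\|A_n\|_{C([0,T];X)}$, so the whole problem is reduced to proving that $A_n\to 0$ uniformly in $t$.

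For the source term, I would first observe that the kernel $\phi(s,t):=e^{-iH_0(t-s)}\mu\,\psi(s)$ is jointly norm-continuous on the compact triangle $\Delta=\{0\le s\le t\le T\}$ (continuity of $s\mapsto\psi(s)$ in $X$, boundedness of $\mu$, and the strong continuity of the semigroup give this, with the customary splitting into two terms). Hence $\phi(\Delta)$ is a compact subset of $X$, and for every $\eta>0$ one can choose a partition of $[0,T]$ and produce a step approximation $\tilde\phi$ with $\sup_\Delta\|\phi-\tilde\phi\|_X<\eta$. Inserting $\tilde\phi$ in $A_n(t)$ gives an error bounded by $\eta\,(\|\e_n\|_{L^1}+\|\e\|_{L^1})$, uniformly in $t$ and in $n$. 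The remaining step-function piece is a finite linear combination, in $X$, of scalar integrals $\int_{s_{j-1}}^{\min(s_j,t)}(\e_n-\e)\,ds$; these tend to $0$ pointwise in $t$ by weak $L^1$ convergence, and equi-integrability makes them equicontinuous in $t$, so Ascoli--Arzel\`a promotes the convergence to uniform in $t\in[0,T]$. Letting $\eta\to 0$ concludes $\|A_n\|_{C([0,T];X)}\to 0$.

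The main obstacle is precisely this last step: upgrading the scalar weak-$L^1$ convergence of $\e_n$ to strong convergence of the $X$-valued Bochner integrals $A_n(t)$, \emph{uniformly in the upper limit} $t$. Pointwise convergence at each $t$ is straightforward, but uniformity requires both the compactness of $\phi(\Delta)$ (to allow step-function approximation in $X$) and the equi-integrability of $\{\e_n\}$ extracted from Dunford--Pettis. The rest of the argument (a priori bounds and two Gronwall applications) is routine once this compactness-type fact is in hand.
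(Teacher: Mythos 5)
Your proof is correct, and it is complete at every point where completeness matters. Be aware, though, that the paper itself offers no proof of this lemma --- it is stated as imported material, with the reader referred to \cite{bms} and \cite{thesejulien} --- so there is no internal argument to compare yours against; what you have written is in substance the classical Ball--Marsden--Slemrod argument from the cited source. The two Gronwall reductions are routine, as you say, and the genuinely delicate step is exactly the one you single out: upgrading scalar weak $L^1$ convergence of $\e_n$ to convergence of the $X$-valued integrals $A_n(t)$ uniformly in the upper limit $t$. Your treatment of it is sound: joint norm-continuity of $(s,t)\mapsto e^{-iH_0(t-s)}\mu\psi(s)$ on the compact triangle gives uniform continuity and a compact range, hence the step-function approximation with error controlled by the Banach--Steinhaus bound $\sup_n\|\e_n\|_{L^1}<\infty$; and for the step part, pointwise vanishing of $t\mapsto\int_0^t(\e_n-\e)\,ds$ (weak convergence tested on indicators) plus the equicontinuity furnished by Dunford--Pettis equi-integrability yields, via Ascoli--Arzel\`a, the uniform vanishing that closes the argument.
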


This compactness property allows to give 
{\it negative results} for general
bilinear controllability settings as in~\cite{bms} where they
 were applied
to the wave and rod equations. Specific statements
for quantum control have been latter derived
(Thm.~1 from \cite{hbref31} ; see also \cite{bms,gtsydney}) and can
be stated as:

\begin{thm}\label{negative}
 Let $S$ be the complex unit sphere of $L^2(\RR^\gamma)$.
Let ${\mu}$ be a bounded operator from the Sobolev
space $X$ (e.g., $X = H^1_x({\RR}^{{\gamma}})$)
to itself and let $H_0$ generate
a $C^0$ semigroup of bounded linear operators on $X$.
Denote by $ \p_\epsilon (x,t)$
the solution of
\eqref{eq:Schr}. Then
the set of attainable states from $\p_0$ defined by
\begin{equation}
\label{attain}
{\mathcal AS} =
\cup_{T > 0} \{ \p_\epsilon (x,T) ; \epsilon(t) \in L^2([0,T]) \}
\end{equation}
is contained in a countable union of compact subsets
of $X$. In
particular its
complement
$ S \cap X  \ \backslash \ {\mathcal AS} $
 with respect to $S \cap X$
is everywhere dense on  $S \cap X$.
The same holds true
for the complement with respect to
$S$.
\end{thm}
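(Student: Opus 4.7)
The plan is to exhibit $\mathcal{AS}$ as a countable union of compact subsets of $X$ and then apply the Baire category theorem. The compactness Lemma~\ref{lemma:compacity_bilinear} will do all the analytic work; the rest is bookkeeping.

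First I would parametrize. For each pair $(n,m)\in\mathbb{N}^\ast\times\mathbb{N}^\ast$, set
\[
K_{n,m}=\bigl\{\p_\epsilon(t)\ :\ t\in[0,n],\ \epsilon\in L^2([0,n]),\ \|\epsilon\|_{L^2([0,n])}\le m\bigr\}.
\]
Any element $\p_\epsilon(T)\in\mathcal{AS}$ is captured by extending $\epsilon$ by zero on $[T,n]$ for $n\ge T$, since this extension does not change the solution on $[0,T]$. Thus $\mathcal{AS}\subseteq\bigcup_{n,m}K_{n,m}$, a countable union.

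Next I would prove each $K_{n,m}$ is compact in $X$. The ball $B_m\subset L^2([0,n])$ is weakly compact (Banach--Alaoglu plus reflexivity) and metrizable for the weak topology (since $L^2$ is separable); hence $[0,n]\times B_m$ is a compact metric space. I claim the evaluation map $F:(t,\epsilon)\mapsto\p_\epsilon(t)$ is continuous from $[0,n]\times B_m^{\mathrm{weak}}$ to $X$ with its strong topology. Indeed, if $(t_k,\epsilon_k)\to(t,\epsilon)$, then weak $L^2$ convergence on the bounded interval $[0,n]$ implies weak $L^1$ convergence there (test functions in $L^\infty\subset L^2$), so Lemma~\ref{lemma:compacity_bilinear} gives $\p_{\epsilon_k}\to\p_\epsilon$ strongly in $C([0,n];X)$. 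A standard three-$\epsilon$ argument, combining this uniform convergence with the continuity of $\p_\epsilon(\cdot)$ in $X$, yields $\p_{\epsilon_k}(t_k)\to\p_\epsilon(t)$ in $X$. Thus $K_{n,m}=F([0,n]\times B_m)$ is compact.

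Finally I would conclude by Baire category. The space $S\cap X$ is closed in $X$ (the $L^2$ norm is continuous on $X\hookrightarrow L^2$), hence a complete metric space. Because $X$ is infinite-dimensional, no compact subset of $X$ contains an open ball of $S\cap X$, so each $K_{n,m}\cap(S\cap X)$ is nowhere dense in $S\cap X$. Baire's theorem then gives that $(S\cap X)\setminus\mathcal{AS}$, which contains the dense $G_\delta$ obtained by removing $\bigcup_{n,m}K_{n,m}$, is dense in $S\cap X$. For density of the complement in the full sphere $S$, one simply notes that $\mathcal{AS}\subset X\subsetneq L^2$, so $S\setminus X$ is already $L^2$-dense in $S$ and a fortiori so is $S\setminus\mathcal{AS}$.

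The main obstacle I foresee is the joint continuity of $F$ on $[0,n]\times B_m^{\mathrm{weak}}$: the lemma is phrased for weak-$L^1$ convergence of controls at a fixed time horizon, so one must explicitly upgrade weak-$L^2$ convergence to weak-$L^1$ on the bounded interval and combine uniform-in-time convergence of $\p_{\epsilon_k}$ with a moving evaluation point $t_k\to t$. Once this continuity is established, compactness of $K_{n,m}$ and the Baire argument are essentially automatic.
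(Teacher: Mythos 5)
Your proposal is correct and follows essentially the route the paper intends: the paper states this theorem as a citation of \cite{hbref31,bms}, with Lemma~\ref{lemma:compacity_bilinear} presented as the key ingredient, and your argument (decomposition into sets $K_{n,m}$ bounded in time and control norm, compactness of each via weak compactness of the $L^2$-ball plus the lemma, then Baire category on $S\cap X$ and density of $S\setminus X$ in $S$) is precisely the Ball--Marsden--Slemrod scheme those references carry out. The only point stated rather than proved --- that no compact subset of the infinite-dimensional space $X$ can contain a relatively open ball of $S\cap X$ --- is a standard fact, so the proof stands as given.
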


In a different formulation, the theorem implies that for any $\p_0  \in
X  \cap S$,
within any open set around an arbitrary point $\p  \in
X  \cap S$
there exists a state unreachable from $\p_0$ with $L^2$ controls.

\begin{rem}
Note that the result does not give information on the
closure of the set ${\mathcal AS}$. In particular it may well be
that while  ${\mathcal AS}$ still has dense complement its closure be
the whole space $X$. This would be the so-called
approximate controllability i.e. the possibility to reach targets
arbitrarily close to any given final state.
Despite some attempts in the litterature, at this time
there is no answer (positive or negative) to this question.
Among the ingredients that make this study
difficult
we can mention the possibility to use arbitrary large final time $T$,
the necessity to treat the continuous spectrum of the operator $H_0$
and the intrinsically unbounded  domain 
on which the problem is posed.
\end{rem}

To complicate even more the landscape, 
situations exists where
 the results obtained in infinite and finite dimensional
representation are of different nature. We will illustrate with a 
classical result on the harmonic oscillator.

\begin{lem} \label{cor:harmonic_infinitedim}
The infinite dimensional
harmonic oscillator $H_0 = -\frac{\partial^2}{\partial x^2} + x^2$,
$\mu=x$ is not controllable. Moreover the set of all
admissible states is a low-dimensional manifold of $L^2$.
\end{lem}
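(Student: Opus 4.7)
The plan is to exploit the very special algebraic structure of the pair $(H_0,\mu)=(-\partial_x^2+x^2,\, x)$, for which the real Lie algebra generated by the skew-adjoint operators $iH_0$ and $i\mu$ closes in finite dimension. A direct computation gives $[H_0,x]=-2\partial_x$, $[H_0,\partial_x]=-2x$ and $[x,\partial_x]=-\mathrm{Id}$, so that
\beq
\mathfrak{g} := \mathrm{span}_{\RR}\{iH_0,\, ix,\, \partial_x,\, i\mathrm{Id}\}
\eeq
is a $4$-dimensional real Lie algebra (the oscillator algebra, whose derived subalgebra is the Heisenberg algebra $\mathfrak{h}_3$). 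Every subsequent step will flow from this closure property.

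First I would pass to the interaction picture by setting $\phi(t)=e^{iH_0 t}\p(t)$, so that $\phi$ obeys $i\partial_t\phi=\e(t)\mu(t)\phi$ with $\mu(t)=e^{iH_0 t}\mu\, e^{-iH_0 t}$. Solving the Heisenberg equations $\dot X=2P$, $\dot P=-2X$ inside $\mathfrak{g}$ yields the explicit closed form $\mu(t)=\cos(2t)\,x-i\sin(2t)\,\partial_x$, so that the skew-adjoint interaction-picture generator $-i\e(t)\mu(t)$ lies at every time in the $3$-dimensional Heisenberg subalgebra $\mathfrak{h}=\mathrm{span}_{\RR}\{-ix,\,\partial_x,\,i\mathrm{Id}\}$.

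Because $\mathfrak{h}$ is nilpotent of step $2$, the Baker--Campbell--Hausdorff series terminates and I would look for the interaction-picture propagator in the factored Heisenberg-group form
\beq
U_I(t)=e^{i\theta(t)\mathrm{Id}}\,e^{-i\alpha(t)x}\,e^{\beta(t)\partial_x},
\eeq
with $\alpha,\beta,\theta\in C([0,T];\RR)$. Substituting into the equation and identifying components against the basis of $\mathfrak{h}$ produces three scalar ODEs driven linearly by $\e(t)$ that determine $\alpha,\beta,\theta$ by quadrature. Each factor is unitary on $L^2(\RR)$: $e^{\beta\partial_x}$ is translation by $\beta$ and $e^{-i\alpha x}$ is modulation by $e^{-i\alpha x}$. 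The factorisation is to be established first on the Schwartz space -- which is jointly preserved by $e^{-iH_0 t}$, translations and modulations -- and then extended by density.

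Composing with the free evolution, the reachable set from $\p_0$ at time $T$ is contained in $\{e^{-iH_0 T}e^{i\theta}e^{-i\alpha x}T_\beta\p_0 : (\theta,\alpha,\beta)\in\RR^3\}$, where $T_\beta$ denotes translation by $\beta$. Taking the union over $T\ge 0$, the admissible set is the image in $L^2(\RR)$ of a smooth map from the $4$-parameter space $\RR_+\times\RR^3$, hence at most a $4$-dimensional immersed submanifold; in particular it has empty interior in the infinite-dimensional unit sphere $S$, and neither exact nor approximate controllability can hold. The main technical obstacle is the rigorous justification of the BCH factorisation for controls $\e\in L^\infty_{loc}\cap L^2$ and unbounded operators $x, \partial_x$: one must work on the Schwartz core, verify by direct differentiation that the ansatz satisfies the Schr\"odinger equation there, and conclude in $L^2$ by uniqueness of mild solutions.
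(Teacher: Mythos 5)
Your proposal is correct, and it starts from exactly the same key fact as the paper: the commutators $[H_0,x]=-2\partial_x$, $[H_0,\partial_x]=-2x$, $[x,\partial_x]=-\mathrm{Id}$, so that $iH_0$, $ix$, $\partial_x$, $i\,\mathrm{Id}$ span a four-dimensional real Lie algebra. The difference is in what happens after that observation. The paper stops there: it asserts that a four-dimensional Lie algebra forces all reachable states onto a low-dimensional manifold of $L^2$, deferring any analytic justification to the reference on the quantum harmonic oscillator. You instead carry the integration out explicitly: interaction picture, the closed formula $\mu(t)=\cos(2t)\,x-i\sin(2t)\,\partial_x$, the Wei--Norman/BCH factorization of the interaction propagator over the Heisenberg group, and quadrature formulas for $\alpha,\beta,\theta$. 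This buys a concrete four-parameter description $e^{-iH_0T}e^{i\theta}e^{-i\alpha x}T_\beta\p_0$ (with $T_\beta$ the translation by $\beta$) of every reachable state, from which non-controllability follows by a Baire-category argument: images of compact parameter sets are compact, hence nowhere dense in the unit sphere of $L^2$, so the reachable set is meager. Your version is self-contained where the paper's is essentially a citation.

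Two caveats. First, for a general $\p_0\in L^2$ the parametrizing map is only continuous, not smooth (differentiability in $\beta$, for instance, requires $\p_0\in H^1$), so \emph{smooth map / immersed submanifold} should be weakened; this is harmless because continuity is all the meagerness argument needs. Second, and more substantively, your final claim that \emph{approximate} controllability also fails does not follow from the justification you give: a $\sigma$-compact set with empty interior can perfectly well be dense (a continuous curve can successively visit every point of a countable dense subset of the sphere). To exclude approximate controllability you would need to show that the four-parameter orbit is closed in $L^2$, which here can be done: $T$ and $\theta$ effectively live on circles since $e^{-i\pi H_0}=-\mathrm{Id}$, and if $|\alpha_n|+|\beta_n|\to\infty$ then $e^{-i\alpha_n x}T_{\beta_n}\p_0\to 0$ weakly, which is incompatible with strong convergence to a unit vector; bounded parameters then yield a limit inside the orbit. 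Since the lemma as stated only claims non-controllability together with the low-dimensional-manifold description, this overreach does not invalidate your proof of the statement, but the extra claim should either be dropped or be supported by the closed-orbit argument above.
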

\begin{proof}
Let us begin by noting that the operators
$-i H_0$ and $-i \mu$ form a Lie algebra of  dimension $4$. Indeed,
let us compute the iterated commutators  of
$H_0 = -\frac{\partial^2}{\partial x^2} + x^2$ and  $\mu = x$~:
\beqn
& \ &
[ i (-\frac{\partial^2}{\partial x^2} + x^2), i x ] =
2 \frac{\partial}{\partial x} \\
& \ & [ ix, \frac{\partial}{\partial x} ] = -i \\
& \ &
[ i (-\frac{\partial^2}{\partial x^2} + x^2), i \frac{\partial}{\partial x}]
 = -2i x
\eeqn
Thus the dimension of the Lie algebra = $4$ and as such the system cannot
be controllable (all the states are on a low dimensional
manifold of $L^2$).
We refer to~\cite{mazyar_rouchon_oscillator} for recent 
contributions 
when the algebra of the operators $H_0$ and $\mu$ is finite
dimensional.
\end{proof}

This result is to be contrasted with additional
works that show that any (spectral) truncation of the
harmonic oscillator is controllable (see~\cite{sonia1} for details).

What can be deduced from the above result is that truncating an infinite
dimensional system is not always justified and care must be taken to check
that the control obtained in the resulting finite dimensional approximation
remain a good control for the initial, infinite dimensional system.
Of course, this is not needed for situations which are
inherently finite dimensional quantum systems (e.g.,  spins).

\subsection{Finite dimensional bilinear control} 
\label{subsec:bilinearcontrol}

Here our focus will be on finite dimensional systems. 
We introduce  an orthonormal basis
$D = \{ \psi_i(x) ; i=1,..,N\} $
for a
finite dimensional space.
An important example of such a space is
the one spanned by the first $N$
eigenstates of the internal Hamiltonian $H_0$. This example is 
also motivated in bi-linear settings by the ``perturbation''
argument that considers the control term $\e(t) \mu$ as
a first order developpement of $H(t)$ around $H_0$. Note
however that no concept of ``smallness'' is introduced in the
definition of admissible controls $\UU$.

Denote by $M$ the linear space that $D$ generates,
and let
$H_{0;a,b} = \langle H_0 \psi_a, \psi_b \rangle $
and
$\mu_{\ell;a,b} = \langle \psi_a , \mu_\ell \psi_b \rangle $
be the expressions of the operators
$H_0$ and $\mu_\ell$  with respect to this basis, 
$\ell=1,...,L$. To keep notations simple we will still denote from now
on by $H_0$ and $\mu_\ell$ the resulting 
$N \times N$ symmetric matrices.

In the Galerkin approach, expressing the 
Schr\"odinger equation in the space $M$ is equivalent to
supposing $\p(x,t) = \sum_{i=1}^N  \psi_i(x) c_i(t)$.
\beqn \label{eq:finitebilinear}
& \ & i \frac{d c(t;\e;c_0)}{dt} = H_0 c(t;\e;c_0) 
+ \left[ \e(t)\mu_1 + ...+ \e^L(t) \mu_L \right] c(t;\e;c_0)   \\
& \ & \nonumber c(t=0;\e;c_0) = c_0.
\eeqn
In the following, when no ambiguity prevents it,
 we will also simply denote
$c= c(t;\e;c_0)$. The finite dimensional counterpart of the 
norm conservation property~\eqref{consv}  reads:
$$ \sum_{n=1}^N |c_n|^2 = 1.$$
i.e., the state $c$ evolves on the unit sphere $S_N$ of
$\CC^N$.
The controllability can be formulated in this case as:
\begin{defn}
The system ($H_0,\mu_1,...,\mu_L$) is called (wavefunction) controllable, if
for any two states $c_k \in S_N$, $k=1,2$ there exists a final time
$T< \infty$ and control
$\e(t) \in L^2([0,T])$ such that the solution of
eqn.~\eqref{eq:finitebilinear} starting from $c_1$ ends in
$c_2$ at final time $T$: $c(T;\e;c_1)=c_2$.
\end{defn}

Although specific results for this setting exist~\cite{controllability0,controllability1}, a different 
alternative is to 
see~\eqref{eq:finitebilinear} as a system posed on  $U(N)$\footnote{$U(N)$ is
the set of all $N\times N$ complex unitary matrices.}.
We introduce the evolution equation on $U(N)$:
\beqn \label{eq:finitebilinearUN}
& \ & i \frac{d U(t;\e)}{dt} = \left[ 
H_0 +\e(t)\mu_1 + ... + \e^L(t)\mu_L
\right] U(t;\e)   \\
& \ & \nonumber U(t=0;\e) = Id.
\eeqn
Since $H_0$ and $\mu_\ell$ are symmetric matrices,
$U(t;\e)$ will remain unitary for
all $t\ge 0$. It is classical to remark then that the evolution of
$c(t;\e;c_0)$ can be obtained from the evolution of
$U(t;\e)$ by
$$c(t;\e;c_0) = U(t;\e)c_0.$$ In particular it follows that 
if the set of all attainable matrices $U(t;\e)$ is at least
 $SU(N)$ then the system is controllable. This is almost a necessary
condition for controllability, a notable exception being the
circumstance when $N$ is even: in this case, if the set of all
attainable matrices  contains $Sp(N/2)$ then controllability
still holds. We refer to~\cite{brockettsphere,albertini1} for
more detailed information.

Let us just mention that different representations of the system 
include the 
density matrix formulation with time dependent density
matrix operator $\rho(t)$ satisfying
\beqn \label{eqn:densityevolution}
& \ &
i \frac{\partial}{\partial t} \rho(t;\e;\rho_0) =
[ H_0  + \e(t)\mu_1 + ... +\e^L(t)\mu_L, \rho(t;\e;\rho_0)]
\\ & \ &
\rho(t=0;\e;\rho_0) = \rho_0
\nonumber
\eeqn
Then one can show 
$\rho(t;\e;\rho_0) = U(t;\e) \rho_0 U^\dag(t;\e)$. 
Controllability in this case is the possibility to steer
any initial mixed state $\rho_0$ to any other state $\rho^f$ unitarily
equivalent to it\footnote{A $N\times N$ matrix $\rho_2$ is
said unitarily equivalent to a $N \times N$ matrix $\rho_1$ if
there exists $M \in U(N)$ such that $\rho_2 = M \rho_0 M^\dag$.}.

Note that the
density matrix controllability
is equivalent to requiring that the set
of all matrices attainable from identity be at least $SU(N)$.

At a general level, the evolution
equation~\eqref{eq:finitebilinearUN} can be
re-written as
\beqn \label{eqn:lieevolution}
& \ & \frac{dx(t;\e;x_0)}{dt} = (A +\e(t) B_1 + ... \e^L(t)B_L) x(t;\e;x_0)
\\
& \ & x(0) = x_0.
\eeqn
where $x(t;\e;x_0)$ belongs to a Lie group $G$~(
see~\cite{bookliegroups,liebourbaki13,liebourbaki46} for basic facts about
the lie groups) and
$A$,$B_1$,...,$B_K$ to its associated Lie algebra $L(G)$.
The equation above is to be taken in the usual sense (using
the exponential map) when e.g.,
$\e(t)$ is piecewise continuous/constant and in a weak sense
(integral form) for general $\e(t)$~(see e.g.~\cite{bms} for additional
details).
For the quantum control problem
$A = -iH_0$ and $B_\ell=-i\mu_\ell$, $G = U(N)$.

\begin{rem}
Everything that will be said in this and following sections applies
with trivial modifications 
to the situation of several laser fields. For notational
convenience we will only give here the results for a unique laser field.
\end{rem}

We will denote by  $L_{A,B_1,...,B_k} \subset L(G)$  the
Lie algebra spanned by $A$, $B_k$, $k=1,...,K$ and by
$e$ the unity of $G$.

Let us now consider the set of all
reachable states from an initial state $y$:
\beq
\reachable{t}{\UU}{y} = \{ x(t;\e;y)
 \textrm{ solution of~\eqref{eqn:lieevolution} }; \e \in \UU \}.
\eeq
It is immediate to see that
\beq
\reachable{t}{\UU}{y} = \reachable{t}{\UU}{e}y
\eeq
and thus, describing the set $\reachable{t}{\UU}{e}$
allows to completely describe all other reachable
sets. When the final time is not specified, we will
denote
\beq
\reachable{}{\UU}{y} = \cup_{t\ge 0} \reachable{t}{\UU}{y}.
\eeq
The central question is to characterize $\reachable{}{\UU}{e}$.
When the bi-linear setting is considered i.e. $L=1$ and we note
$B=B_1$, we have the following result~\cite{jurd,lobry1}:
\begin{thm} \label{thm:liecontrollability}
Consider the system~\eqref{eqn:lieevolution}
defined on a Lie group $G$ with associated Lie algebra $L(G)$ containing
$A$ and $B$. If $G$ is
compact and the Lie algebra $L_{A,B}$ generated by $A$ and $B$ is
the complete algebra $L(G)$ : $L_{A,B} = L(G)$
then the set  $\reachable{}{\UU}{e}$ of all states states from
the identity is the Lie group $G$. Moreover,
there exists $0< T < \infty$ such that
$\reachable{T'}{\UU}{e} = G$ for all $T' \ge T$.
\end{thm}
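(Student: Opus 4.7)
The plan is to follow the classical Jurdjevic--Sussmann strategy for right--invariant systems on compact Lie groups, proceeding in four steps: (i) the reachable set is a semigroup, (ii) compactness turns it into a subgroup, (iii) the bracket--generating hypothesis forces this subgroup to contain a neighborhood of the identity, hence to coincide with $G$, and (iv) a separate compactness argument produces the uniform time $T$.

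First I would fix the class of admissible controls as piecewise constant (which, by density and the continuous dependence statement of Lemma~\ref{lemma:compacity_bilinear}, is sufficient to describe $\overline{\reachable{}{\UU}{e}}$). Then any element of $\reachable{}{\UU}{e}$ is a finite product
\[
\exp\!\bigl(t_n (A+v_n B)\bigr) \cdots \exp\!\bigl(t_1 (A+v_1 B)\bigr),\qquad t_i\ge 0,\ v_i\in V,
\]
so concatenation of controls shows that $\reachable{}{\UU}{e}$ is a subsemigroup of $G$ containing $e$. The first nontrivial step is to promote this semigroup to a group. This is where compactness of $G$ enters: for any $g\in\reachable{}{\UU}{e}$ the orbit $\{g^n\}_{n\ge 1}$ is relatively compact, so it is recurrent and admits a subsequence $g^{n_k}\to e$; then $g^{n_k-1}\to g^{-1}$, which places $g^{-1}$ in the closure $\overline{\reachable{}{\UU}{e}}$. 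A standard density argument (Krener/Sussmann type) shows in fact that the interior of $\overline{\reachable{}{\UU}{e}}$ and of $\reachable{}{\UU}{e}$ coincide, so once we know the set has nonempty interior it is a genuine subgroup.

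The second nontrivial step is to show that $\reachable{}{\UU}{e}$ has nonempty interior. Here is where the Lie--algebra hypothesis $L_{A,B}=L(G)$ is used. For $v_1,v_2\in V$ one has, by the Baker--Campbell--Hausdorff formula,
\[
\exp\!\bigl(s(A+v_1 B)\bigr)\exp\!\bigl(s(A+v_2 B)\bigr)\cdots
=\exp\!\bigl(s\cdot \text{linear combination}+s^2\,\text{brackets}+\cdots\bigr),
\]
and by suitably choosing alternating sequences one can produce, inside $\reachable{}{\UU}{e}$, one--parameter subgroups generated by $\pm B$ (by varying $v$ along two values of $V$ and subtracting) and by iterated commutators of $A$ and $B$. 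Iterating this, the tangent directions realizable by the reachable set at $e$ exhaust $L_{A,B}=L(G)$, so by the inverse function theorem the set has nonempty interior. Combined with the subgroup property and the connectedness of $G$ (a compact Lie group is here taken connected, as is $U(N)$), any open subgroup is closed, hence equals $G$.

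It remains to sharpen "reachable at some time" into "reachable at every time $T'\ge T$". The family $\reachable{\le T}{\UU}{e}:=\bigcup_{0\le t\le T}\reachable{t}{\UU}{e}$ is increasing in $T$, closed in $G$ (again by compactness of trajectories), and its union is $G$; a Baire--category argument on the compact space $G$ yields a finite $T_0$ with $\reachable{\le T_0}{\UU}{e}=G$. To trade "at most $T_0$" for "exactly $T'$", I would use recurrence of the free flow $\exp(tA)$ on $G$: given any $g=x(t;\e;e)$ with $t\le T_0$ and any $T'\ge T_0$, the compact orbit of $\exp(sA)$ returns arbitrarily close to $e$ at some $s=T'-t-\tau$ with $\tau$ small, and one concatenates with a small correcting piece of control. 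Closedness of the reachable set then lets us absorb the approximation and conclude $\reachable{T'}{\UU}{e}=G$ for every $T'\ge T$ with $T$ a suitable multiple of $T_0$. I expect this last time--uniformization step to be the main technical obstacle, because the free flow is not in general periodic and the "small correction" has to be controlled so as not to escape $G$ nor blow up the total elapsed time.
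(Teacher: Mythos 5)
Your steps (i) and (ii) are fine and coincide with the paper's own machinery (the proof of Theorem~\ref{thm:approx}): concatenation makes $\reachable{}{\UU}{e}$ a semigroup, and the power-sequence argument in the compact group puts inverses in the closure, so $\closurereachable$ is a group. The genuine gap is in your third step, the one that is supposed to give nonempty interior. The alternating/BCH constructions you invoke produce the directions $\pm B$ and the iterated commutators only as \emph{limits} of products that moreover involve inverses: isolating $B$ "by subtracting" means composing with backward flows (inadmissible, since time runs forward along the drift $A$), or taking $\lambda^{-1}(A+\lambda B)$ as $\lambda\to\infty$, and the commutator formula~\eqref{eq:trotter} is a limit of products containing $e^{-tX_i/\sqrt{n}}$. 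All of these land in $\closurereachable$, not in $\reachable{}{\UU}{e}$ --- which is exactly why this type of argument, as used in the paper's Theorem~\ref{thm:approx}, yields only \emph{approximate} controllability. At first order, the tangent cone to the exact reachable set at $e$ is merely the positive span of $\{A+vB:\, v\in V\}$; the bracket directions appear at higher order, so the inverse function theorem cannot be applied the way you apply it. (Invoking instead the "Krener/Sussmann" identity $\mathrm{int}\,\reachable{}{\UU}{e}=\mathrm{int}\,\closurereachable$ as a black box would beg the question: combined with approximate controllability it gives the theorem in one line, so it \emph{is} the hard part.) The way this step is actually closed --- in Jurdjevic--Sussmann, and reproduced in the paper's proof of Theorem~\ref{thm:general} --- is different: by Chow's theorem the products $\prod_\ell e^{t_\ell M_{i_\ell}}$ with \emph{signed} times fill $G$; the sets of such products with bounded total time are compact, so Baire's theorem gives one product map whose image has nonempty interior; Sard's theorem plus analyticity of the map give a parameter point with all $t_\ell>0$ where the differential has full rank; only then does the inverse function theorem produce an open set of points reachable with admissible, positive-time controls, after which your density-plus-subgroup argument finishes.

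Your final, time-uniformization step has a second gap: fixed-time reachable sets $\reachable{T'}{\UU}{e}$ are not closed in general (piecewise constant controls with an arbitrary value set $V$ give no compactness of trajectories at fixed time), so the small recurrence error of $e^{sA}$ cannot be "absorbed by closedness"; nor is a "small correcting piece of control" available in small time, since systems with drift are not small-time locally controllable at $e$. The standard repair uses the rank argument above once more: it gives a point $g_0$ interior to an \emph{exact-time} set $\reachable{t_0}{\UU}{e}$; steering $g_0$ back to $e$ in some time $s_0$ and using right invariance maps that open set onto an open neighborhood $W$ of $e$ with $W\subseteq\reachable{t_0+s_0}{\UU}{e}$; since $G$ is compact and connected, $W^n=G$ for some finite $n$, and concatenation gives $\reachable{n(t_0+s_0)}{\UU}{e}\supseteq W^n=G$; finally, for $T'\ge T:=n(t_0+s_0)$ one reaches any target by first running an arbitrary admissible control for time $T'-T$ and then using exact-time-$T$ controllability from wherever one lands.
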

\noindent This gives, when applied to quantum control~\cite{hbref30}:
($L=1$, $\mu = \mu_1$):
\begin{thm} \label{vish}
 If the  Lie algebra  $L_{-iH_0,-i\mu}$
generated by $-iH_0$ and $-i\mu$ 
has dimension $N^2$  (as a vector space over the real numbers) then
the system~\eqref{eq:finitebilinearUN} is density matrix controllable.
Furthermore, if both $-iH_0$ and $-i\mu$ are traceless then a
sufficient condition for the density matrix (thus wavefunction)
controllability of quantum system
is that the Lie algebra $L_{-iH_0,-i\mu}$ has dimension $N^2-1$.
\end{thm}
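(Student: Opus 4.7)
The plan is to apply the general Lie-group controllability statement in Theorem~\ref{thm:liecontrollability} to the specific compact Lie groups $U(N)$ and $SU(N)$, and then translate the resulting reachability on the group into density matrix and wavefunction controllability as discussed just above the theorem.

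First, I would observe that since $H_0$ and $\mu$ are Hermitian (the paper says symmetric but they are complex self-adjoint $N\times N$ matrices), the operators $A=-iH_0$ and $B=-i\mu$ are skew-Hermitian, hence lie in the Lie algebra $u(N)=L(U(N))$, whose real dimension is $N^2$. The group $U(N)$ is compact, so Theorem~\ref{thm:liecontrollability} is applicable with $G=U(N)$. The hypothesis $\dim L_{-iH_0,-i\mu}=N^2$ combined with the inclusion $L_{-iH_0,-i\mu}\subseteq u(N)$ forces $L_{-iH_0,-i\mu}=u(N)$, so Theorem~\ref{thm:liecontrollability} yields $\reachable{}{\UU}{e}=U(N)$. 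Since $SU(N)\subset U(N)$, this reachable set in particular contains $SU(N)$, which by the discussion preceding the theorem is exactly what is needed for density matrix controllability: for any $\rho_0$ and any $\rho^f=M\rho_0M^\dag$ with $M\in U(N)$, one can realize $U(T;\e)=M$ for some admissible control and final time.

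For the traceless case, the key observation is that skew-Hermitian traceless matrices form $su(N)$, the Lie algebra of $SU(N)$, whose real dimension is $N^2-1$. When both $-iH_0$ and $-i\mu$ are traceless, every iterated commutator is also traceless (since $\mathrm{tr}[X,Y]=0$), so $L_{-iH_0,-i\mu}\subseteq su(N)$. The dimension hypothesis $\dim L_{-iH_0,-i\mu}=N^2-1$ then forces $L_{-iH_0,-i\mu}=su(N)$. Applying Theorem~\ref{thm:liecontrollability} this time with $G=SU(N)$ (again compact) gives $\reachable{}{\UU}{e}=SU(N)$, which as above implies density matrix controllability, and in turn wavefunction controllability since $SU(N)$ acts transitively on the unit sphere $S_N\subset\CC^N$.

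There is not really an analytical obstacle here; the main delicate point is the algebraic bookkeeping, namely verifying that the two natural ambient Lie algebras of the generators are precisely $u(N)$ and $su(N)$, and that in the traceless case one cannot leave $su(N)$ by bracketing. Once those two inclusions are in place, the dimensional hypotheses immediately promote them to equalities, and Theorem~\ref{thm:liecontrollability} does all the work of producing the explicit control and final time $T$ that realize an arbitrary target unitary.
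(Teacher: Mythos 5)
Your proposal is correct and follows exactly the route the paper intends: the paper presents Theorem~\ref{vish} as a direct application of Theorem~\ref{thm:liecontrollability} to $G=U(N)$ (resp.\ $G=SU(N)$ in the traceless case), using the equivalence stated just before it that density matrix controllability amounts to the reachable set from the identity containing at least $SU(N)$. Your added bookkeeping --- skew-Hermitian generators lie in $u(N)$, tracelessness is preserved by brackets so the algebra stays in $su(N)$, and the dimension hypotheses promote the inclusions to equalities --- is precisely the content the paper leaves implicit.
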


Although the results above conveniently address the situation of a bi-linear
setting, we are not aware of any similar results for 
the general quantum control situations~\eqref{eq:H_high}. In particular,
we know by the result above that, if $u_1$,...,$u_L$
are {\it independent controls}, i.e.,
\beqn \label{eqn:lieevolution2}
& \ & \frac{dx(t;\e;x_0)}{dt} = (A +u_1(t) B_1 + ...+ u_L(t)B_L) x(t;\e;x_0)
\\
& \ & x(0) = x_0,
\eeqn
an equivalent condition for the controllability
of the above system on its compact Lie group $G$ is that 
$A$,$B_1$,...,$B_L$ generate the whole Lie algebra $L(G)$.
But, there is no obvious way to say what will happen when the
controls $u_\ell$ are not independent but
related by the condition 
$u_\ell = \e^\ell(t)$. This study is the purpose of the next section.

\section{Criteria for non linear operators} \label{sec:nonlingen}

In order to extend the controlability results above 
beyond bi-linear interaction Hamiltonians, we will introduce
in this section a more general setting: we will rewrite
the control equation~\eqref{eqn:lieevolution} as
\beqn \label{eqn:lieevolutionH}
& \ & \frac{dx(t;\e;x_0)}{dt} = (F_1(\e(t)) B_1 + ... +
F_l(\e(t))B_L) x(t;\e;x_0)
\\
& \ & x(0) = x_0.
\eeqn
\noindent where $F_k : V \to \RR$ are real functionals.
Note in particular that we do not impose {\bf any} 
assumption on the regularity of the
functionals $F_k$.
Of course, one can recover the 
equation~\eqref{eqn:lieevolution} by setting $F_k(x) =x^k$ and adding
$F_0 = 1$.

In order to avoid trivialities, we will suppose in the following that
\beq
\textrm{ the functionals } (F_k)_{k=1}^L \textrm{ are linearly independent.} 
\eeq
Otherwise one may just consider a subset of functionals that are linearly
independent and adjust the matrices $B_k$ accordingly. Of course, since
we do not specify 
the set $V$ that lists all the possible control values
$\e$ the hypothesis above needs to be understood in the following 
acception: the functionals $F_k$ are said to be linearly
dependent if there exist constants $\lambda_1,...,\lambda_L \in \RR$ 
such that $\sum_{j=1}^L \lambda_j F_j(v) = 0$ for all $v\in V$. Otherwise
the functionals are said to be linearly independent.

In order to obtain the quantum controllability results, we begin
in this section with a controllability criterion 
on compact Lie groups. These results build on classical references
for bilinear controllability~\cite{jurd}.
We give first a weak but intuitive form and then we state
 the fully general one.

\begin{thm} \label{thm:approx}
Let~\eqref{eqn:lieevolutionH} be a control system posed on a compact
connected 
Lie group $G$, with linearly independent functionals $(F_k)_{k=1}^L$. 
Then 
if the  Lie algebra 
generated by $B_1$,...,$B_L$ is the full Lie algebra 
$L(G)$ of the group $G$, then 
 the system is approximatelly controllable, i.e. for any 
$a,b\in G$, $b$ is an accumulation point of the set of 
all states $x(t)$ attainable from $x(0) = a$ with admissible controls.
\end{thm}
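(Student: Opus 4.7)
The plan is to recognize that, since admissible controls are piecewise constant with values in $V$, the reachable set from identity coincides with the subsemigroup $\mathcal{R}\subset G$ generated by the positive one-parameter rays $\{\exp(tM_v):t\geq 0\}$, where $M_v:=\sum_{k=1}^L F_k(v)B_k\in L(G)$. Indeed, on each constant-value subinterval of length $t$ with value $v$, the flow of \eqref{eqn:lieevolutionH} advances by $\exp(tM_v)$. The theorem thus reduces to showing that $\overline{\mathcal{R}}=G$, after which left-invariance $\reachable{}{\UU}{a}=\reachable{}{\UU}{e}\cdot a$ delivers approximate controllability between any $a$ and $b$.

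The first step is to upgrade $\overline{\mathcal{R}}$ from a closed semigroup to a closed subgroup of $G$. The key observation is that in a compact group, positive powers of any element $g$ accumulate at $g^{-1}$: if $g^{n_k}\to h$ along a strictly increasing subsequence, then $g^{n_{k+1}-n_k}\to e$, and hence $g^{n_{k+1}-n_k-1}\to g^{-1}$. Applied to $g=\exp(tM_v)$, this places every $\exp(-tM_v)$ in $\overline{\mathcal{R}}$. Inverting a generic product $\exp(t_1 M_{v_1})\cdots\exp(t_N M_{v_N})\in\mathcal{R}$ factor by factor and approximating each $\exp(-t_j M_{v_j})$ by positive-time elements then shows that $\overline{\mathcal{R}}$ is stable under inversion, and is therefore a closed subgroup of $G$.

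By Cartan's closed-subgroup theorem, $\overline{\mathcal{R}}$ is a Lie subgroup, and its Lie algebra $\ell$ contains every $M_v$, $v\in V$ (since each one-parameter subgroup $\exp(tM_v)$ lies in $\overline{\mathcal{R}}$). Here the linear-independence hypothesis on the $F_k$ enters: it is exactly equivalent to the statement that the vectors $(F_1(v),\ldots,F_L(v))\in\RR^L$ span $\RR^L$ as $v$ ranges over $V$. Choosing $v_1,\ldots,v_L\in V$ that make $[F_k(v_j)]_{k,j=1}^L$ invertible, and inverting this $L\times L$ matrix, expresses each $B_k$ as a real linear combination of $M_{v_1},\ldots,M_{v_L}$, so every $B_k\in\ell$. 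The Lie-bracket generation hypothesis then forces $\ell=L(G)$, and connectedness of $G$ yields $\overline{\mathcal{R}}=G$.

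The step I expect to be most delicate is the semigroup-to-group closure passage: while the compact-group inversion trick is elementary on a single one-parameter subgroup, one must be careful that the approximations of the factors $\exp(-t_jM_{v_j})$ by positive-time elements can be patched into a finite product so that the inverse of a generic reachable element remains a limit point of $\mathcal{R}$ itself. Once this is in hand, the linear-algebra inversion using independence of $F_1,\ldots,F_L$, the invocation of Cartan's theorem, and the connectedness argument are all routine.
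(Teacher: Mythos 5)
Your proof is correct, and while its skeleton matches the paper's --- pass to the semigroup $\mathcal{R}$ generated by the rays $\exp(tM_v)$, upgrade its closure to a group via the compactness trick (powers of an element accumulate at its inverse), and use linear independence of the $F_k$ to pick $v_1,\dots,v_L$ with $[F_k(v_j)]_{k,j}$ invertible so that each $B_k$ lies in the span of the $M_{v_j}$ --- the decisive final step is genuinely different. The paper stays self-contained: it shows by hand, using the limit formulas $e^{t[X_1,X_2]}=\lim_{n\to\infty}\left(e^{-tX_2/\sqrt{n}}e^{-tX_1/\sqrt{n}}e^{tX_2/\sqrt{n}}e^{tX_1/\sqrt{n}}\right)^n$ and $e^{t_1X_1+t_2X_2}=\lim_{n\to\infty}\left(e^{t_1X_1/n}e^{t_2X_2/n}\right)^n$, that the set $\{M\in L(G): e^{tM}\in\closurereachable\ \forall t\in\RR\}$ is a real vector space closed under brackets and containing the $M_j$, hence all of $L(G)$, and then concludes via surjectivity of the exponential map on a compact connected group. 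You instead invoke Cartan's closed-subgroup theorem, which hands you the bracket- and sum-closure for free: $\overline{\mathcal{R}}$ is an embedded Lie subgroup whose Lie algebra $\ell=\{X:\exp(tX)\in\overline{\mathcal{R}}\ \forall t\}$ contains every $M_v$, hence every $B_k$, hence $L(G)$; then $\overline{\mathcal{R}}$ contains a neighborhood of the identity, is therefore open, and equals $G$ by connectedness. Your route buys brevity and avoids surjectivity of $\exp$ (an open subgroup of a connected group is already everything); the paper's route buys elementarity, needing no structural theorem beyond the two product formulas. Two smaller remarks. First, the ``delicate patching'' you flag in the inversion step is a non-issue: $\overline{\mathcal{R}}$ is a closed subsemigroup (closure of a semigroup under continuous multiplication), so the reversed product of the factors $\exp(-t_jM_{v_j})\in\overline{\mathcal{R}}$ lies in $\overline{\mathcal{R}}$ automatically; the paper sidesteps even this by applying the power trick directly to an arbitrary $a\in\closurereachable$ rather than to generators. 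Second, to keep all exponents positive you should pass to a subsequence with $n_{k+1}-n_k\ge 2$, exactly as the paper does. Your duality formulation of the independence step (the $F_k$ are independent iff the vectors $(F_1(v),\dots,F_L(v))$ span $\RR^L$) is cleaner than the paper's determinant-expansion contradiction, but the content is identical.
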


\begin{proof} Let us begin by noting that if $F_k$ are independent then
there exist values $e_j \in V$, $j=1,...,L$ such that 
the vectors $v(e_j)=(F_1(e_j),...,F_L(e_j))$ are linearly independent. Suppose on the contrary that this is not true. 
Consider then a maximal set of vectors
$v(E_1),...,v(E_p)$ that are linearly independent. The matrix 
$(F_k(E_j))_{k=1;j=1}^{L;p}$ has rank preciselly $p$ and thus one
can extract $p$ functionals, denoted for notational convenience
$F_1,...,F_p$ such that $rank(F_k(E_j))_{k=1;j=1}^{p;p} = p$. Take now
some functional $F_{p+1}$ not in this set. It follows that
 $rank(F_k(E_j))_{k=1;j=1}^{p+1;p+1} = p$ 
and as such $det(F_k(E_j))_{k=1;j=1}^{p+1;p+1} = 0$ for any
$E_{p+1} \in V$. This determinant
can be computed as:
\beq \label{eq:lindep}
det(F_k(E_j))_{k=1;j=1}^{p+1;p+1} =
\lambda_1 F_1(E_{p+1}) + ... + \lambda_{p+1} F_{p+1}(E_{p+1}) = 0.
\eeq
Note that $\lambda_k$ do not depend on $E_{p+1}$
and that in particular 
$$\lambda_{p+1} = det (F_k(E_j))_{k=1;j=1}^{p;p}\neq 0.$$ 
Thus
eq.~\eqref{eq:lindep} implies that a linear combination with at least one
non-null coefficient $\lambda_{p+1}$ exists such that 
$\sum_{k=1}^{p+1} \lambda_k F_k(E) = 0 $ for all  $E \in V$. 
This is prevented by hypothesis. 

We have thus proved the existence of 
 $e_j \in V$, $j=1,...,L$ with 
the $v(e_j)=(F_1(e_j),...,F_L(e_j))$  linearly independent. This
means that $M_j = \sum_{k=1}^L F_k(e_j) B_k$ are also linearly
independent and span the same linear space as $B_k$, $k=1,...,L$
and thus $M_j$ span also the Lie algebra $L(G)$.
Moreover, all states $\{ e^{t M_j}x(0); t \in \RR_+ , j \le L\}$ 
are atainable from $x(0)$ for the control 
system~\eqref{eqn:lieevolutionH}.

It is clear that to prove approximate controllability
 is sufficient to set $a=e$ the neutral element of the group $G$. 
i.e. we have to prove that 
the closure $\closurereachable$  (with respect to the Lie group
topology)
of the reachable states from identity
is the whole $G$. From the 
hypothesis and sujectivity of the exponential mapping 
this is equivalent to proving that 
$$ \{e^M; M \in L(G) \} \subset \closurereachable.$$

We will begin by noting that $\closurereachable$ is a group. Indeed,
take two elements $x(t_1;\e_1;e),x(t_2;\e_2,e) \in \reachable{}{\UU}{e}$.
Then,defining the control $\e_{12}:[0,t_1+t_2] \to \RR$
by $\e_{12}(t) =\e_1(t)$ for all $0\le t \le t_1$ and
$\e_{12}(t_1+t) =\e_2(t)$ for all $0\le t \le t_2$ we obtain
$x(t_1+t_2;\e;e)= x(t_2;\e_2;e)x(t_1;\e_1,e)$ and thus
$ x(t_2;\e_2;e)x(t_1;\e_1,e) \in \reachable{}{\UU}{e}$. Hence
$\reachable{}{\UU}{e}$ is a semi-group which implies that
$\closurereachable$ is a semi-group too.

Let us now consider
$a \in \closurereachable$. Then $a^n \in \closurereachable$ for any
$n=1,2,...$. Since
$ \closurereachable \subset G$ which is a compact group,
$\closurereachable$ is compact at its turn. Then
there exists a sequence, that we can take such
that $n_k$ with $n_k - n_{k-1} \ge 2$,
with $a^{n_k} \to b \in G$. But then
$\closurereachable \ni a^{n_k - n_{k-1} -1 } \to bb^{-1} a^{-1}$
and thus $a^{-1} \in \closurereachable$.

It is immediate to see that, since the
solution for the control $\e(t) \equiv e_j$ 
is $x(t;0,\e) = e^{tM_j}$ we have the
inclusion
$ \{ e^{t M_j} ; t \ge 0 , j \le L\} \subset \closurereachable$. Since
$\closurereachable$ is a group, we will also have
$ \{ e^{t M_j} ; t \in \RR; j\le L \} \subset \closurereachable$.
Consider now two matrices $X_1, X_2 \in L(G)$ such that
$$ \{ e^{tX_i} ; t \ge 0 \} \subset \closurereachable, \ i=1,2.$$
We invoke now the formula
\beq \label{eq:trotter}
e^{t [ X_1,X_2]} = \lim_{n\to \infty} \left(
e^{-tX_2/\sqrt{n}} e^{-tX_1/\sqrt{n}} e^{tX_2/\sqrt{n}}
e^{tX_1/\sqrt{n}}
\right)^n
\eeq
to conclude that
$$ \{ e^{t[X_1,X_2]} ; t \in \RR \} \subset \closurereachable.$$

Similarly, we use the formula
$e^{t_1X_1 + t_2 X_2} = \lim_{n\to \infty}
\left(
e^{t_1 X_1 /n} e^{t_2 X_2 /n}
\right)^n
$
to conclude that
$$ \{ e^{t_1 X_1+ t_2 X_2} ; t_1, t_2 \in \RR \} \subset \closurereachable.$$

We have thus proved that the set
$\{ M\in L(G) ; e^{tM}  \in \closurereachable; \ \forall t\in \RR \}$
contains $M_j$, $j=1,...,L$, 
is closed to commutation and is a real vector space.
Thus it contains $L(G)$ hence the conclusion of the
theorem.
\end{proof}

The Theorem above has the advantage to be both intuitive and self-contained. 
However it only gives approximate controllability results, which are not the
strongest forms available. But, in order to obtain  exact controllability
more involved techniques are needed. In the litterature,
similar situations are treated by making use of the Chow 
theorem~\cite{chow} and of the bi-linear control 
techniques~\cite{jurd,sussman2}. The criterion can be stated as follows:

\begin{thm} \label{thm:general}
Let~\eqref{eqn:lieevolutionH} be a control system posed on a compact
connected 
Lie group $G$ with linearly independent functionals $F_k : V \to \RR$,
$k=1, ...,L$
and piecewise constant controls $\e$ taking any value in some set $V$. 
Then a necessary and sufficient condition for the exact controllability
is that
the  Lie algebra $L_{B_1,...,B_L}$
generated by $B_1$,...,$B_L$ be the full Lie algebra 
$L(G)$ of the group $G$.
\end{thm}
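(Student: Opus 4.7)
The plan is to handle necessity directly and to establish sufficiency by combining three ingredients: the approximate controllability of Theorem~\ref{thm:approx}, an accessibility statement (nonempty interior for the forward reachable set), and an elementary lemma on dense subsemigroups of compact groups. For necessity: if $L_{B_1,\ldots,B_L}$ is a proper Lie subalgebra of $L(G)$, then every $M(v) := \sum_{k=1}^L F_k(v) B_k$ lies in it, so every trajectory generated by a piecewise constant control is a finite product of exponentials of elements of $L_{B_1,\ldots,B_L}$ and therefore remains in the connected immersed Lie subgroup $H \subsetneq G$ integrating $L_{B_1,\ldots,B_L}$; exact controllability is then impossible.

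For sufficiency, the algebraic preparation is already present in the proof of Theorem~\ref{thm:approx}: linear independence of $F_1,\ldots,F_L$ yields $e_1,\ldots,e_L \in V$ such that the matrices $M_j := M(e_j)$ span the same linear subspace of $L(G)$ as $\{B_1,\ldots,B_L\}$, hence generate $L(G)$ as a Lie algebra. I would then invoke Krener's accessibility theorem for the family of right-invariant vector fields $\{M(v) : v \in V\}$ on $G$: since they satisfy the Lie algebra rank condition, the forward-time reachable set $R := \reachable{}{\UU}{e}$ from the identity has nonempty interior in $G$. Combined with the density $\overline{R} = G$ provided by Theorem~\ref{thm:approx}, this is enough to conclude.

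The final step is the following compact-group lemma: \emph{if $S$ is a subsemigroup of a compact topological group $G$ with $\overline{S} = G$ and $\mathrm{int}(S) \ne \emptyset$, then $S = G$.} The short proof: fix an open set $W \subset S$; for any $g \in G$ the open set $gW^{-1}$ meets the dense set $S$, giving $s \in S$ and $w \in W$ with $s = gw^{-1}$, so $g = sw \in S \cdot S \subset S$. Applying this to $S = R$ delivers $R = G$, which is exactly the stated exact controllability. The step I expect to be the main obstacle is Krener's accessibility: directly we only dispose of positive-time flows $\{e^{tM(v)} : t \ge 0\}$, and the Lie brackets populating $L(G)$ involve motion in directions not literally realizable in forward time. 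Krener's theorem handles this via Baker--Campbell--Hausdorff expansions along iterated positive-time flows, but some care is needed since the control set $V$ is arbitrary (not assumed open or convex); the natural workaround is to first reduce to the finitely many generators $M(e_j)$ produced by the preparation step, and then run the classical Chow/Krener argument with this finite family, after which the semigroup lemma combines cleanly with Theorem~\ref{thm:approx} to conclude.
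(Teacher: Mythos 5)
Your proposal is correct and follows essentially the same route as the paper's proof: the same necessity argument (trajectories are trapped in the proper subgroup/sub-system generated by $L_{B_1,\ldots,B_L}$), the same reduction to the finite family $M_j=\sum_{k=1}^L F_k(e_j)B_k$ inherited from Theorem~\ref{thm:approx}, a nonempty-interior accessibility step for the forward reachable set, and the same concluding lemma that a dense subsemigroup with nonempty interior must be all of $G$. The only divergence is that you cite Krener's accessibility theorem as a black box where the paper establishes the nonempty interior self-containedly --- via Chow's theorem, compactness of the sets $A(i,P)$, the Baire category theorem, and Sard's theorem plus analyticity to locate a full-rank point with strictly positive times --- which is exactly the ``classical Chow/Krener argument run on the finite family'' that you yourself identify as the needed workaround.
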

\begin{proof} We recall 
(see also end of Section~\ref{subsec:bilinearcontrol}) that the set of 
attainable states is included in the set of attainable states for the system 
\beqn \label{eqn:lieevolution22}
& \ & \frac{dx(t;\e;x_0)}{dt} = [u_1(t) B_1 + ... +u_L(t)B_L] x(t;\e;x_0)
\\
& \ & x(0) = e,
\eeqn
whose controllability is equivalent to ``$L_{B_1,...,B_L}=L(G)$''.
Thus $L_{B_1,...,B_L}=L(G)$ is a necessary condition for controlability.
To prove that is also sufficient, consider as in the proof of the 
Theorem~\ref{thm:approx}, the matrices $M_j = \sum_{k=1}^L F_k(e_j)B_k$,
$j=1,...,L$ that generate the same Lie algebra
$L_{B_1,...,B_L}$. We recall that  all 
states $\{ e^{t M_j}; t \in \RR_+ , j \le L\}$ and all finite products of 
such states
are atainable from the identity $e$. We invoke now a technique present
in the proof of Thm.~3.1 of~\cite{sussman2}: for any $P \in \NN$
and any multi-index $i=(i_1,...,i_r) \in \{ 1,...,L\}^r$  
denote by $A(i,T)$ the atainable states with the sequence of
operators $i$ and total time less than $T$ : 
$$A(i,T) = \left\{ \prod_{\ell=1}^r e^{t_\ell M_{i_\ell}} ; 
\sum_{\ell=1}^r |t_\ell| \le P, t_1,...,t_r \in \RR \right\}.$$
 We know by the Chow
theorem that the union of the sets $A(i,T)$ is the whole Lie group $G$. Also,
it is immediate that any $A(i,T)$ is image of a compact set thus compact.
It follows by the Baire category theorem that 
$A(i,P)$ has non-empty interior
at least for a couple ($i$,$P$). 
For such an $i=(i_1,...,i_m)$ we introduce
 the mapping $F: \RR^m \to G$ defined by
$t=(t_1,...,t_\ell) \mapsto F(t) = \prod_{\ell=1}^r e^{t_\ell M_{i_\ell}} $.
This mapping is analytic and its image is has nonempty interior. By the Sard 
theorem its differential $dF(t)$ has full rank (i.e. equals 
the dimension of the
tangent space $TG$ of $G$) at least at some point $t$
and thus in a neighborhood. But since $dF(t)$ depends analytically
on $t$ the set of points where the rank is full is dense in $\RR^m$ and
as such the rank is full for some $t$ with all components strictly 
positive. Using a local inverse mapping theorem it follows that 
the image $F({\mathcal T})$ has non-empty interior where ${\mathcal T}$ is an open subset of
$\RR_+^m$. But all points in $F({\mathcal T})$ are realisable with admissible 
controls and thus the set of reachable points 
 $\reachable{}{\UU}{e}$ contains an open subset $D$ of $G$. 

By the previous Theorem,  $\reachable{}{\UU}{e}$ 
 is a subgroup i.e. for any 
 $y \in \reachable{}{\UU}{e}$ the set  $Dy$ is also reachable.
Since in addition $\reachable{}{\UU}{e}$ is dense in $G$ 
it follows that $ \reachable{}{\UU}{e} = G$.
\end{proof}

\section{Applications to quantum control}

The purpose of this section is to instantiate the results obtained 
previously to the specific situation of the quantum control.
We will give two results, one for the  density matrix formalism and
the second for the wave function.

\subsection{Density matrix}

To consider the specific situation of the density matrix formalism,
we use the results of the Section~\ref{sec:nonlingen} for the Lie group
$U(N)$. We obtain a first 
\begin{thm}
Consider the system
\beqn \label{eqn:densityevolutionGEN}
& \ &
i \frac{\partial}{\partial t} \rho(t;\e;\rho_0) =
[ H_0  + F_1(\e(t))\mu_1 + ...+ F_L(\e(t))\mu_L, \rho(t;\e;\rho_0)]
\\ & \ &
\rho(t=0;\e;\rho_0) = \rho_0
\nonumber
\eeqn
and suppose that the family $\{ 1, F_1,...,F_L\}$ 
is linearly independent.

Then, when at least one matrix  $H_0$,$\mu_1$,...,$\mu_L$
has nonzero trace, the equation~\eqref{eqn:densityevolutionGEN}
is density matrix controllable if and only if the Lie algebra 
$L_{iH_0,i\mu_1,...,i\mu_L}$
spanned by
the matrices $iH_0$,$i\mu_1$,...,$i\mu_L$ is the Lie algebra $u(N)$
of all skew-hermitian matrices or equivalently 
$dim_{\RR} L_{iH_0,i\mu_1,...,i\mu_L} = N^2$.

Otherwise, when all matrices 
$H_0$,$\mu_1$,...,$\mu_L$ have zero trace,
a necessary and
sufficient condition for controllability is that 
$L_{iH_0,i\mu_1,...,i\mu_L} = su(N)$
or equivalently 
$dim_{\RR} L_{iH_0,i\mu_1,...,i\mu_L} = N^2-1$.
\end{thm}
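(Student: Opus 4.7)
The plan is to reduce the density matrix controllability question to the group-level criterion of Theorem~\ref{thm:general} applied to the associated propagator equation. First I would introduce the propagator $U(t;\e)$ defined by $\ddt U(t;\e) = [-iH_0 + F_1(\e(t))(-i\mu_1) + \cdots + F_L(\e(t))(-i\mu_L)] U(t;\e)$ with $U(0)=Id$, so that $\rho(t;\e;\rho_0) = U(t;\e)\rho_0 U(t;\e)^\dag$. Setting $F_0 \equiv 1$, $B_0 = -iH_0$ and $B_k = -i\mu_k$ for $k\ge 1$, this fits exactly the form of~\eqref{eqn:lieevolutionH}, and the hypothesis that $\{1,F_1,\ldots,F_L\}$ is linearly independent is precisely the linear-independence assumption required by Theorem~\ref{thm:general}. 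Since the conjugation action $U\mapsto U\rho_0 U^\dag$ is insensitive to the global $U(1)$ phase, density matrix controllability is equivalent to the set of reachable propagators from $Id$ containing $SU(N)$.

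Next I would split on the trace hypothesis, writing $\liealg = L_{-iH_0,-i\mu_1,\ldots,-i\mu_L}$, a real subalgebra of $u(N)$. If $H_0,\mu_1,\ldots,\mu_L$ are all traceless then every generator lies in $su(N)$, so $\liealg \subset su(N)$ and $U(t;\e)\in SU(N)$ for all $t$. Applying Theorem~\ref{thm:general} with the compact connected group $G=SU(N)$ yields exact reachability of $SU(N)$ if and only if $\liealg = su(N)$, i.e. $\dim_{\RR}\liealg = N^2-1$. If instead at least one generator has nonzero trace, the natural ambient group is the compact connected $U(N)$, and Theorem~\ref{thm:general} with $G=U(N)$ gives full reachability of $U(N)$ (which contains $SU(N)$) if and only if $\liealg = u(N)$, i.e. $\dim_{\RR}\liealg = N^2$.

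For the converse (necessity), if $\liealg$ is strictly smaller than the claimed algebra the reachable set is contained in the proper connected Lie subgroup with Lie algebra $\liealg$, so density matrix controllability forces this subgroup to contain $SU(N)$, equivalently $\liealg \supset su(N)$. In the traceless case this immediately forces $\liealg = su(N)$. In the traceful case I would use the fact that $su(N) = [u(N),u(N)]$ is the unique codimension-one Lie subalgebra of $u(N)$ (any codimension-one subalgebra contains the derived algebra and then equals it by dimension); since $\liealg$ contains a traceful element it cannot coincide with $su(N)$, so $\liealg \supset su(N)$ forces $\liealg = u(N)$ and $\dim_{\RR}\liealg = N^2$.

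The main obstacle is exactly this dimensional dichotomy together with the proper identification of the ambient Lie group under each trace hypothesis. Once one observes that the only Lie subalgebras of $u(N)$ sandwiched between $su(N)$ and $u(N)$ are these two algebras themselves, the presence or absence of a traceful generator cleanly selects which one is being generated, and the two counts $N^2-1$ and $N^2$ acquire their advertised meaning without further computation. All remaining ingredients—group-level controllability, closure under compositions of propagators, compactness and connectedness of $U(N)$ and $SU(N)$—are inherited from Theorem~\ref{thm:general}.
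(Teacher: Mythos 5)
Your proof is correct and follows essentially the same route as the paper: both reduce the density-matrix question to Theorem~\ref{thm:general} applied to the propagator equation, taking $G=U(N)$ in the traceful case and $G=SU(N)$ in the traceless case, and using the global-phase adjustment $M_{su}=Me^{i\gamma}$ to identify density matrix controllability with reachability of $SU(N)$. Your codimension-one subalgebra argument for necessity in the traceful case simply spells out a step that the paper's very terse proof leaves implicit.
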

\begin{proof}
The first part of the conclusion follows from
Theorem~\ref{thm:general}
for the Lie group $G=U(N)$.

When all matrices have zero trace one uses the same result
for  $G=SU(N)$ noting that if two matrices $\rho_1$ and
$\rho_2$ are unitarily equivalent $\rho_2 = M \rho_1 M^\dag$ then
there exists $\gamma \in R$ with $M_{su}= Me^{i\gamma} \in SU(N)$ 
and  $\rho_2 = M_{su} \rho_1 M^\dag_{su}$.
\end{proof}

\noindent
An algorithmic verification of the above theorem can be devised as follows:
\begin{enumerate}

\item 
Test whether the functions 
$\{ 1, F_1,...,F_L\}$  are linearly independent. If the answer is yes
go to next step, otherwise keep only a subset $F_{i_1}, ... , F_{i_p}$
with $\{ 1, F_{i_1},...,F_{i_p}\}$ linearly independent and
modify the $B_1$ ... ,$B_L$ accordingly. For notational convenience
we suppose all functionals are independent ($p=L$).

\item Construct the traceless matrices
$\widetilde{H_0} = H_0 - \frac{Tr(H_0)}{N}Id$,
$\widetilde{\mu_1} = \mu_1 - \frac{Tr(\mu_1)}{N}Id$,...,
$\widetilde{\mu_L} = \mu_1 - \frac{Tr(\mu_L)}{N}Id$.
Denote by ${\mathcal O} = \{i\widetilde{H_0}, i\widetilde{\mu_1},...,
i\widetilde{\mu_L} \}$ .

\item Write any element of  ${\mathcal O}$ as a column vector
and compute the rank $r = rank({\mathcal O})$ over the real numbers.

\item Construct all commutators ${\mathcal C}$ 
of matrices in 
${\mathcal O}$\footnote{Some optimizations are possible at this point
as only new commutators are generally needed to be computed. We do not
enter into details here.}
and test whether  $rank ({\mathcal O} \cup {\mathcal C}) = r$. If not,
set ${\mathcal O} := {\mathcal O} \cup {\mathcal C}$ and
return to previous step.

\item Test whether $r=N^2 -1$. If yes the system is controllable, if not
the controllability does not hold.
\end{enumerate}
Even more precise results can be derived for the situation in
Eqn~\ref{eqn:densityevolution}.
\begin{thm}
Consider the developpement of the interaction Hamiltonian
$H =  H_0  + \e(t)\mu_1 + ... +\e^L(t)\mu_L$ 
resulting in the following evolution equation
\beqn \label{eqn:densityevolutionPART}
& \ &
i \frac{\partial}{\partial t} \rho(t;\e;\rho_0) =
[ H_0  + \e(t)\mu_1 + ... +\e^L(t)\mu_L, \rho(t;\e;\rho_0)]
\\ & \ &
\rho(t=0;\e;\rho_0) = \rho_0
\nonumber
\eeqn

Then, when at least one matrix  $H_0$,$\mu_1$,...,$\mu_L$
has nonzero trace, the equation~\eqref{eqn:densityevolutionPART}
is density matrix controllable if and only if the Lie algebra 
$L_{iH_0,i\mu_1,...,i\mu_L}$
spanned by
the matrices $iH_0$,$i\mu_1$,...,$i\mu_L$ is the Lie algebra $u(N)$
of all skew-hermitian matrices or equivalently 
$dim_{\RR} L_{iH_0,i\mu_1,...,i\mu_L} = N^2$.

Otherwise, when all matrices 
$H_0$,$\mu_1$,...,$\mu_L$ have zero trace,
a necessary and
sufficient condition for controllability is that 
$L_{iH_0,i\mu_1,...,i\mu_L} = su(N)$
or equivalently 
$dim_{\RR} L_{iH_0,i\mu_1,...,i\mu_L} = N^2-1$.
\end{thm}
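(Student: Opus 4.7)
The plan is to derive this theorem as a direct corollary of Theorem~\ref{thm:general} by specializing the functionals to $F_k(\epsilon) = \epsilon^k$ for $k = 1, \ldots, L$, with the constant $F_0 \equiv 1$ attached to $H_0$. There is essentially no new work beyond checking the linear-independence hypothesis of that theorem and splitting the conclusion into the two trace/traceless cases exactly as in the immediately preceding statement.

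First I would verify that the family $\{1, \epsilon, \epsilon^2, \ldots, \epsilon^L\}$ is linearly independent as a family of real functionals on the admissible value set $V$. Any nontrivial real linear combination $\sum_{k=0}^{L} \lambda_k \epsilon^k$ is a polynomial of degree at most $L$ and therefore has at most $L$ real zeros; so as soon as $V$ contains more than $L$ points (a harmless assumption in the piecewise-constant framework — otherwise the attainable set is trivially finite and no nontrivial controllability statement is possible) the family is independent. Equivalently, a Vandermonde argument produces values $e_0, \ldots, e_L \in V$ with $\det(e_j^k)_{j,k=0}^{L} \neq 0$, which is exactly the pointwise condition exploited inside the proof of Theorem~\ref{thm:general} to extract independent generators $M_j = \sum_{k=0}^{L} F_k(e_j) B_k$ that span the same linear space as $-iH_0, -i\mu_1, \ldots, -i\mu_L$ and therefore generate the same Lie algebra.

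Next I would apply Theorem~\ref{thm:general} with $G = U(N)$ and generators $-iH_0, -i\mu_1, \ldots, -i\mu_L$: the lift $U(t;\epsilon)$ reaches all of $U(N)$ if and only if $L_{iH_0, i\mu_1, \ldots, i\mu_L} = u(N)$, i.e.\ has real dimension $N^2$. Since $\rho(t) = U(t;\epsilon)\rho_0 U^\dag(t;\epsilon)$, density matrix controllability is equivalent to attainability of $SU(N)$ modulo a global phase, which is secured whenever at least one generator has nonzero trace. In the fully traceless case every generator already lies in $su(N)$, the evolution stays in $SU(N)$, and I would re-invoke Theorem~\ref{thm:general} with $G = SU(N)$ under the sharper criterion $\dim_{\RR} L_{iH_0, i\mu_1, \ldots, i\mu_L} = N^2 - 1$; density matrix controllability follows by absorbing the overall phase $e^{i\gamma}$ into the unitary exactly as in the proof of the preceding theorem. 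No substantive obstacle arises: the only nontrivial ingredient is the Vandermonde observation on the monomials $\epsilon^k$, after which both cases reduce mechanically to Theorem~\ref{thm:general}.
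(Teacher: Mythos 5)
Your proposal is correct and follows what is evidently the paper's intended route: the paper states this theorem as a direct specialization ($F_k(\e)=\e^k$) of the immediately preceding general density-matrix theorem, whose proof consists precisely of applying Theorem~\ref{thm:general} with $G=U(N)$ in the nonzero-trace case and $G=SU(N)$ in the traceless case, together with the same phase-absorption remark $M_{su}=Me^{i\gamma}$ that you invoke; your Vandermonde argument supplies the linear-independence hypothesis $\{1,\e,\dots,\e^L\}$ that the paper leaves implicit. One minor slip: your parenthetical claim that a value set $V$ with at most $L$ points makes the attainable set ``trivially finite'' is false (finitely many control values still generate a continuum of states via arbitrary exponential times), but this does not affect the argument, since the assumption that $V$ contains more than $L$ points is genuinely needed for the independence of the monomials and you do state it.
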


\subsection{Wave function}

To derive results for the wave function 
of the same nature as the two criterions above one has to analyse
the transitive subsets of $U(N)$. We recall that a subset $A \subset U(N)$ is
called transitive when for any two vectors $a,b$ on the unit sphere
of $\CC^{N}$ there exists a matrix $X \in A$ with $b = Xa$. 
For the situation of quantum control, such a study is available in the 
literature~\cite{albertini1}. To be able to state the corresponding
result for this specific situation here, we 
introduce 
the centralizer ${\mathcal C}_G z$ of an element $z\in G$ which 
is defined
as the set of all elements that commute with $z$:
$${\mathcal C}_G z = \{ x \in G : xz = zx\}.$$
We also define $P = i \cdot diag(1,0,...,0) \in U(N)$. 

\begin{thm}
Consider the system
\beqn \label{eq:finitebilinear2}
& \ & i \frac{d c(t;\e;c_0)}{dt} = 
 \left[ H_0 + F_1(\e(t))\mu_1 + ...
+ F_L(\e(t)) \mu_L \right] c(t;\e;c_0)   \\
& \ & \nonumber c(t=0;\e;c_0) = c_0.
\eeqn
with $\| c_0 \| =1$. Suppose 
that the family $\{ 1, F_1,...,F_L\}$ 
is linearly independent and
denote by $L_{iH_0,i\mu_1,...,i\mu_L}$
the Lie algebra spanned by
the matrices $iH_0$,$i\mu_1$,...,$i\mu_L$.

Then the 
equation~\eqref{eq:finitebilinear2}
is (wave function) controllable if and only if
$$dim_{\RR} L_{iH_0,i\mu_1,...,i\mu_L} - 
dim ( L_{iH_0,i\mu_1,...,i\mu_L} \cap
{\mathcal C}_G P ) = 2N -2.$$

In particular a sufficient condition for controllability is that 
$$dim_{\RR} L_{iH_0,i\mu_1,...,i\mu_L} = N^2.$$
\end{thm}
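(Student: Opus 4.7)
The strategy is to lift the sphere dynamics to the propagator on $U(N)$, identify the reachable set of propagators using Theorem~\ref{thm:general}, and finally translate sphere-transitivity into the displayed Lie-algebraic condition by invoking the classification of transitive compact Lie subgroups of $U(N)$ due to Albertini and D'Alessandro~\cite{albertini1}.

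First I would introduce the propagator
\beq
i \frac{dU(t;\e)}{dt} = \left[ H_0 + F_1(\e(t))\mu_1 + \cdots + F_L(\e(t))\mu_L \right] U(t;\e), \quad U(0;\e) = Id,
\eeq
so that $c(t;\e;c_0)=U(t;\e) c_0$ and wavefunction controllability is equivalent to the reachable set $\reachable{}{\UU}{e} \subset U(N)$ acting transitively on the unit sphere $S_N$. Rewriting the right-hand side as $F_0(\e)(-iH_0) + F_1(\e)(-i\mu_1) + \cdots + F_L(\e)(-i\mu_L)$ with $F_0 \equiv 1$, the assumption that $\{1, F_1, \ldots, F_L\}$ is linearly independent is precisely what Theorem~\ref{thm:general} requires. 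Applying that theorem inside the connected Lie subgroup $H \subseteq U(N)$ with Lie algebra $L := L_{iH_0, i\mu_1, \ldots, i\mu_L}$ yields $\reachable{}{\UU}{e} = H$.

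Second, I would translate transitivity of $H$ on $S_N$ into the stated dimension identity. The Lie algebra of the centralizer ${\mathcal C}_G P$, where $P = i\cdot diag(1,0,\ldots,0)$, is $u(1) \oplus u(N-1)$ of real dimension $1 + (N-1)^2$, and it is precisely the infinitesimal stabilizer of the complex line $\CC c_0$ through $c_0 = e_1$. Transitivity of $H$ on the projective space $\CC P^{N-1} = U(N)/(U(1) \times U(N-1))$ of real dimension $2N-2$ is therefore the orbit-dimension equality
\beq
dim_{\RR} L - dim_{\RR} \bigl( L \cap {\mathcal C}_G P \bigr) = 2N-2.
\eeq
The classification in~\cite{albertini1} then supplies the nontrivial input that for compact connected Lie subgroups of $U(N)$, projective transitivity already implies sphere transitivity: up to conjugacy, the only such algebras are $su(N)$, $u(N)$, $sp(N/2)$ and $sp(N/2) \oplus \RR i\,Id$, and each of them already acts transitively on $S_N$. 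This yields the if-and-only-if statement, and the ``in particular'' clause is immediate since $L = u(N)$ forces $L \cap {\mathcal C}_G P = u(1) \oplus u(N-1)$ and $N^2 - (1+(N-1)^2) = 2N-2$.

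The main obstacle is precisely this centralizer-versus-stabilizer bookkeeping: a naive orbit-dimension count on the sphere using the \emph{point} stabilizer of $c_0$ would give $2N-1$ rather than the centralizer-based $2N-2$, and closing the gap requires the classification of~\cite{albertini1} and not merely a dimension argument, because in general projective transitivity on $\CC P^{N-1}$ need not imply sphere transitivity on $S_N$; it is the short list of compact transitive subgroups of $U(N)$ that makes the lift automatic here. A secondary technical point is whether the integral subgroup $H$ is closed (and hence compact) in $U(N)$; when it is not, one passes to its closure and must verify that no extra generators are silently introduced, which is standard because $L$ is a subalgebra of the compact algebra $u(N)$.
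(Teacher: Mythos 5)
Your proposal is correct and is essentially the paper's own proof: the paper disposes of this theorem in a single line (``The proof follows from arguments in~\cite{albertini1}''), and what you spell out --- lifting to the propagator on $U(N)$, applying Theorem~\ref{thm:general} to the connected subgroup with Lie algebra $L_{iH_0,i\mu_1,...,i\mu_L}$, identifying $L \cap {\mathcal C}_G P$ as the infinitesimal stabilizer of the line $[e_1]$ so that the dimension identity expresses transitivity on $\CC P^{N-1}$, and then using the Albertini--D'Alessandro classification to pass from projective to sphere transitivity --- is exactly the intended combination, with the details the paper omits. One correction, though, to your closing ``secondary technical point'': when the integral subgroup $H$ of $L$ is not closed in $U(N)$, passing to its closure is \emph{not} harmless, since the closure has a strictly larger Lie algebra (e.g.\ an irrational winding in a torus), so extra generators are necessarily introduced; the claim that none are is false in general. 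The gap disappears if the steps are ordered differently: for sufficiency, the dimension condition together with the classification in~\cite{albertini1} forces $L$ to be conjugate to one of $su(N)$, $u(N)$, $sp(N/2)$, $sp(N/2)\oplus i\RR\, Id$, each of which integrates to a closed, hence compact, subgroup of $U(N)$, and only then does one apply Theorem~\ref{thm:general}; for necessity, compactness of $H$ is never needed, because controllability forces the $H$-orbit of $[e_1]$ to be all of $\CC P^{N-1}$, while an immersed orbit of dimension strictly less than $2N-2$ has measure zero and so cannot exhaust it, which yields the dimension identity directly.
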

\begin{proof} The proof follows from arguments in~\cite{albertini1}.\end{proof}

The following procedure allows to implement the above criteria:
\begin{enumerate}
\item 
Test whether the functions 
$\{ 1, F_1,...,F_L\}$  are linearly independent. If the answer is yes
go to next step, otherwise keep only a subset $F_{i_1}, ... , F_{i_p}$
with $\{ 1, F_{i_1},...,F_{i_p}\}$ linearly independent and
modify the $B_1$ ... ,$B_L$ accordingly. For notational convenience
we suppose all functionals are independent i.e. $p=L$.

\item 
Denote ${\mathcal O} = \{i {H_0}, i {\mu_1},...,
i{\mu_L} \}$ .

\item Write any element of  ${\mathcal O}$ as a column vector
and compute the rank $r = rank({\mathcal O})$ over the real numbers.

\item Construct all commutators ${\mathcal C}$ 
of matrices in 
${\mathcal O}$\footnote{Here again, optimizations are possible.}
and test whether  $rank ({\mathcal O} \cup {\mathcal C}) = r$. If not,
set ${\mathcal O} := {\mathcal O} \cup {\mathcal C}$ and
return to previous step.

\item Extract from 
${\mathcal O}$ the matrices 
that commute with $P$ and compute the rank $d$ of this ensemble over $\RR$.
Test whether $r-d=2N -2$. If yes the system is controllable, if not
the controllability does not hold.
\end{enumerate}

\noindent 
We also obtain
\begin{thm}
Consider the developpement of the interaction Hamiltonian
$H =  H_0  + \e(t)\mu_1 + ... +\e^L(t)\mu_L$ 
resulting in the following evolution equation
\beqn \label{eq:finitebilinear22}
& \ & i \frac{d c(t;\e;c_0)}{dt} = 
 \left[ H_0 + \e(t)\mu_1 + ...
+ \e^L(t) \mu_L \right] c(t;\e;c_0)   \\
& \ & \nonumber c(t=0;\e;c_0) = c_0.
\eeqn
with $\| c_0 \| =1$. 
 Denote by $L_{iH_0,i\mu_1,...,i\mu_L}$
the Lie algebra spanned by
the matrices $iH_0$,$i\mu_1$,...,$i\mu_L$.

Then the 
equation~\eqref{eq:finitebilinear22}
is (wave function ) controllable if and only if
$$dim_{\RR} L_{iH_0,i\mu_1,...,i\mu_L} - 
dim ( L_{iH_0,i\mu_1,...,i\mu_L} \cap
{\mathcal C}_G P ) = 2N -2.$$

In particular a sufficient condition for controllability is that 
$$dim_{\RR} L_{iH_0,i\mu_1,...,i\mu_L} = N^2.$$
\end{thm}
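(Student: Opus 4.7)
The plan is to recognize this statement as the specialization of the preceding general wave-function criterion (for arbitrary functionals $F_k$) to the particular choice of monomial functionals $F_k(\e)=\e^k$, $k=1,\ldots,L$. Under this identification equation~\eqref{eq:finitebilinear22} is exactly of the form~\eqref{eq:finitebilinear2}, so the entire content will follow by direct invocation of that theorem, provided its sole standing hypothesis---linear independence of $\{1,F_1,\ldots,F_L\}$---can be verified in the monomial case.

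First I would verify that independence. Any nontrivial real linear combination of the monomials $1,\e,\e^2,\ldots,\e^L$ is a polynomial of degree at most $L$ and therefore has at most $L$ roots in $\RR$. Provided the admissible control set $V$ contains at least $L+1$ distinct values---a harmless assumption, since otherwise the polynomial Hamiltonian on $V$ coincides pointwise with one using strictly fewer effective coupling matrices, and may be relabelled accordingly---the family is linearly independent in the sense required by the previous theorem. The main necessary-and-sufficient equivalence then follows verbatim.

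It then remains to establish the ``in particular'' clause, i.e.\ that $\dim_{\RR} L_{iH_0,i\mu_1,\ldots,i\mu_L}=N^2$ forces the dimension difference in the criterion to equal $2N-2$. In this case the Lie algebra is all of $u(N)$, and its intersection with $\mathcal{C}_G P$ consists of the skew-Hermitian matrices commuting with $P=i\cdot\mathrm{diag}(1,0,\ldots,0)$. Such matrices preserve the orthogonal splitting of $\CC^N$ into the line spanned by $e_1$ and its orthogonal complement, hence are block-diagonal and form the subalgebra $u(1)\oplus u(N-1)$ of dimension $1+(N-1)^2 = N^2-2N+2$. Subtraction yields $N^2-(N^2-2N+2)=2N-2$, confirming the sufficient condition.

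The main point to emphasise is that essentially no new work is required beyond invoking the preceding theorem: the polynomial-independence check is automatic and the centraliser dimension is a routine block-diagonal computation. The only conceivable obstacle would be a pathologically small control set $V$, handled by the reduction above. In effect this theorem is a corollary of the general nonlinear-functional criterion established in Section~\ref{sec:nonlingen}.
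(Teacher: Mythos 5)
Your proposal is correct and follows exactly the route the paper intends: the theorem is stated as an immediate specialization of the preceding general criterion to $F_k(\e)=\e^k$, the paper leaving the details implicit. Your two added verifications---linear independence of $\{1,\e,\ldots,\e^L\}$ via the root-count argument (with the caveat on the size of $V$), and the computation $\dim\bigl(u(N)\cap {\mathcal C}_G P\bigr)=1+(N-1)^2$ giving the difference $2N-2$---are precisely the omitted steps, and both are carried out correctly.
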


\begin{rem}
All the above results basically state that controllability with linearly
independent functionals of a single control $\e$ is true whenever
the same equation, but with completelly independent controls, is 
controllable.
\end{rem}


\end{document}